\newtheorem{theorem}{Theorem}
\newtheorem{proposition}[theorem]{Proposition}
\newtheorem{corollary}[theorem]{Corollary}
\theoremstyle{definition}
\DeclareMathOperator{\interior}{int}
\DeclareMathOperator{\conv}{conv}
\newcommand{\href}[2]{{#2}}
\tikzset{
>=stealth',
axis/.style={<->},
}
\newcommand{\R}{\mathbb{R}}
\renewcommand{\P}{\mathcal{P}}
\newcommand{\D}{\mathcal{D}}
\title{Geometric duality and parametric duality for multiple objective linear programs are equivalent}
\author{Daniel Dörfler\thanks{Friedrich Schiller University Jena, Germany, \href{mailto:daniel.doerfler@uni-jena.de}{daniel.doerfler@uni-jena.de}}\and Andreas Löhne\thanks{Friedrich Schiller University Jena, Germany, \href{mailto:andreas.loehne@uni-jena.de}{andreas.loehne@uni-jena.de}}}
\begin{document}
\maketitle
\begin{abstract}
\noindent
In 2011, Luc \cite{luc:on_duality_in_multiple_objective_linear_programming} introduced {\em parametric duality} for multiple objective linear programs. He showed that {\em geometric duality}, introduced in 2008 by Heyde and L\"ohne \cite{geom_dual}, is a consequence of parametric duality. We show the converse statement: parametric duality can be derived from geometric duality. We point out that an easy geometric transformation embodies the relationship between both duality theories. The advantages of each theory are discussed.

\medskip\noindent
\textbf{Keywords:} duality theory, multiobjective optimization, linear programming, dual polytopes 

\medskip\noindent
\textbf{Mathematics Subject Classification 2010:} 90C29, 90C05, 90C46
\end{abstract}

\section{Introduction}
Duality plays an important role in optimization from a theoretical point of view as well as an algorithmic one. 
There are several approaches to duality in multiple objective programming, among the earliest are \cite{kornbluth:duality, Roedder77, isermann:dual_pair, isermann:duality}.
A discussion and comparison of these and further approaches can be found in \cite{HamHeyLoeTamWin04, BotGraWan09, luc:on_duality_in_multiple_objective_linear_programming}. 

The concept of geometric duality introduced in \cite{geom_dual} can be motivated by several arguments. Many approaches to multiple objective programming duality suffer from a duality gap in case of the right-hand side of the constraints being zero ($b=0$), compare the discussion in \cite{HamHeyLoeTamWin04}. This duality gap could be closed in  \cite{HamHeyLoeTamWin04} at the price of a set-valued objective function of the dual program. The theory in \cite[Section 4.5.2]{BotGraWan09} has no duality gap and a vector-valued objective, but a non-convex dual problem. In contrast to that, the geometric dual problem is a {\em vector linear program}, which we consider to be a multiple objective linear program where the ordering cone is a polyhedral convex cone $C$ rather than the natural cone $\R^q_+$. While duality in multiple objective programming had little practical relevance in the past, geometric duality has been applied in several fields: Dual algorithms to solve multiple objective linear programs have been developed in \cite{benson_dual_variant}. The multiple objective linear programming software {\em Bensolve} uses the geometric dual program for data storing \cite{bensolve_paper}. In financial mathematics, superhedging portfolios in markets with transaction costs can be interpreted dually using geometric duality \cite{superhedging_portfolios}.

The key idea of geometric duality is to define a duality relation between the primal and dual problem on the basis of polarity of polyhedral convex sets. This is a natural generalization of linear programming duality: Let $p$ be the optimal value of a linear program, the primal problem, and let $d$ be the optimal value of its dual problem. If both problems are feasible, linear programming duality yields $p=d$. If $p$ is identified with the interval $\P:=[p,\infty)$ and $d$ is identified with the interval $\D:=(-\infty, d]$, then $p=d$ can be interpreted as a kind of polarity between $\P$ and $\D$. $\P$ is the intersection of half-spaces $H_+(1,w):=\{y \in \R \mid 1\cdot y \geq w\}$ over $w \in \D$. Likewise, $\D$ is the intersection of half-spaces $H_-(1,y):=\{w \in \R \mid 1 \cdot w \leq y\}$ over $y \in \P$. This duality relation is very simple for the one-dimensional case (its the same as $p=d$) since one-dimensional convex polyhedra are intervals. It becomes more difficult for higher dimensions as pointed out below.

The theories we consider in this paper are geometric duality by Heyde and Löhne \cite{geom_dual} and parametric duality as introduced by Luc in \cite[Section 4]{luc:on_duality_in_multiple_objective_linear_programming}. We point out that both approaches are equivalent in the sense that one can be easily derived from the other. Luc \cite{luc:on_duality_in_multiple_objective_linear_programming} already has shown that geometric duality is a consequence of parametric duality. In this paper (see also \cite{bachelor_thesis}) we show the converse statement. Both approaches have advantages: The parametric dual problem for a multiple objective linear program with $q$ objectives is based on the natural ordering cone but has $q+1$ objectives. The geometric dual problem has only $q$ objectives, as the primal problem, but the ordering cone has a special form.

\section{Notation and problem formulation}
For a set $M \subseteq \R^q$ and a pointed convex cone $C \subseteq \R^q$ an element $x \in M$ is called {\em $C$-minimal} if $(\{x\}-C \setminus\{0\}) \cap M = \emptyset$ and, if $C$ has nonempty interior, $\interior C \neq \emptyset$, $x \in M$ is called {\em weakly $C$-minimal} if $(\{x\}-\interior C) \cap M = \emptyset$.
We call a point $x \in M$ {\em (weakly) $C$-maximal} if $x$ is (weakly) $(-C)$-minimal. Throughout this paper we only consider two ordering cones: $$\R^q_+ := \{y \in \mathbb{R}^{q} \mid y_{1} \geq 0, \dots, y_{q} \geq 0 \}$$ 
and 
$$K := \{y \in \mathbb{R}^{q} \mid y_{1} = \dots = y_{q-1}=0, y_{q} \geq 0 \}.$$ We write $y \geq_{\R^q_+} x$ (or $y\geq x$) if $y-x \in \R^q_+$ and $y \geq_K x$ if $y-x \in K$.

A convex subset $F$ of a polyhedral convex set $M\subseteq \R^q$ is called a {\em face} of $M$ if
$$ [\lambda \in (0,1),\, x,y \in M,\, \lambda x + (1-\lambda)y \in F] \implies x,y \in F.$$ 
The $(r-1)$-dimensional faces of an $r$-dimensional polyhedral convex set $M$ are called {\em facets}; $0$-dimensional faces are called {\em vertices}. A face $F$ of $M$ is called proper if $\emptyset \neq F \neq M$. 

For matrices $P \in \mathbb{R}^{q \times n}$, $A \in \mathbb{R}^{m \times n}$ and a vector $b \in \mathbb{R}^{m}$ we consider the following {\em multiple objective linear program}:
\begin{equation}\label{P}
\textstyle\min Px \quad \text{s.t.} \quad Ax \geq b. \tag{P}
\end{equation}
For a given problem \eqref{P} with the feasible region $S = \{x \in \mathbb{R}^{n} \mid Ax \geq b\}$ we call the set
\begin{equation*}
\mathcal{P} := P[S] + \R^q_+ := \{y \in \R^q |\; \exists x \in S:\; y \geq_{\R^q_+} Px \}
\end{equation*}
{\em upper image} of \eqref{P}.

\section{Geometric duality}
In this section we recall the main result of geometric duality.
We define the dual linear objective function by
$$D:\mathbb{R}^{m} \times \mathbb{R}^{q} \to \mathbb{R}^{q},\; D(u,w) := (w_{1},\dots,w_{q-1},b^{\intercal}u)$$
and consider the following dual vector optimization problem:
\begin{equation}\label{D}
\begin{aligned}
\textstyle{\max_{K}} D(u,w) \quad \text{s.t.} \quad & A^{\intercal}u = P^{\intercal}w, \\
& (u,w) \geq 0 \\
& e^{\intercal}w=1,
\end{aligned}
\tag{D}
\end{equation}
where we set $e=(1,\dots,1)^{\intercal}$.
Analogously to the primal case, we define the {\em lower image} of the dual problem as 
$$\mathcal{D} := D[T]-K := \{y \in \R^q |\; \exists (u,w) \in T:\; y \leq_K D(u,w))\}$$
with $T$ being the feasible region of \eqref{D}, i.e. 
$$T = \{(u,w) \in \mathbb{R}^{m} \times \mathbb{R}^{q} \mid A^{\intercal}u = P^{\intercal}w, (u,w) \geq 0, e^{\intercal}w = 1 \}.$$
To formulate the main result of geometric duality we use the {\em coupling function}:
\begin{equation*}
\varphi: \mathbb{R}^{q} \times \mathbb{R}^{q} \to \mathbb{R}^{q},\qquad \varphi (y,y^{*}) := \sum_{i=1}^{q-1}y_{i}y_{i}^{*}+y_{q}\left(1-\sum_{i=1}^{q-1}y_{i}^{*}\right)-y_{q}^{*}.
\end{equation*}
For $y,y^{*} \in \mathbb{R}^{q}$ we define the sets
\begin{equation*}
H^{*}(y) := \{y^{*} \in \mathbb{R}^{q} \mid \varphi(y,y^{*}) = 0\} \text{ and } H(y^{*}) := \{y \in \mathbb{R}^{q} \mid \varphi(y,y^{*}) = 0. \}
\end{equation*}
Since $\varphi(y,\cdot)$ and $\varphi(\cdot,y^{*})$ are affine functions, the sets $H^{*}(y)$ and $H(y^{*})$ describe hyperplanes in $\mathbb{R}^{q}$. The duality mapping is defined as
\begin{equation*}
\Psi : 2^{\mathbb{R}^q} \to 2^{\mathbb{R}^q},\qquad \Psi(F^{*}) := \bigcap_{y^{*} \in F^{*}} H(y^{*}) \cap \mathcal{P}.
\end{equation*}
A face is called $K$-maximal (weakly $\R^q_+$-minimal) if it consists of only $K$-maximal (weakly $\R^q_+$-minimal) elements.
We can now state the main result of geometric duality, which shows that $\Psi$ defines a duality relation between $\mathcal{P}$ and $\mathcal{D}$.

\begin{theorem}\label{th_geo}(\cite[Theorem 3.1]{geom_dual})
$\Psi$ is an inclusion-reversing \textup{(}i.e. $F_1 \subseteq F_2 \Rightarrow \Psi(F_2) \subseteq \Psi(F_1)$\textup{)} one-to-one map between the set of all $K$-maximal proper faces of $\mathcal{D}$ and the set of all weakly $\R^q_+$-minimal proper faces of $\mathcal{P}$. The inverse map is given by
\begin{equation*}
\Psi^{-1}(F) = \bigcap_{y \in F} H^{*}(y) \cap \mathcal{D}.
\end{equation*}
Moreover, for every $K$-maximal proper face $F^{*}$ of $\mathcal{D}$, one has 
$$\dim F^{*} + \dim\Psi(F^{*})=q-1.$$
\end{theorem}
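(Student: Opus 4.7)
The plan is to build the duality in three stages: weak duality via direct computation, face-level strong duality via linear programming duality, and a dimension count from the resulting bijection. First I would establish weak duality, namely $\varphi(y,y^{*})\geq 0$ for every $y\in\mathcal{P}$ and $y^{*}\in\mathcal{D}$. Writing $y=Px+r$ with $r\in\R^q_+$ and $y^{*}=D(u,w)-s$ with $s\in K$, the identities $A^{\intercal}u=P^{\intercal}w$ and $e^{\intercal}w=1$ together with $(u,w)\geq 0$ and $Ax\geq b$ reduce $\varphi(y,y^{*})$ to $w^{\intercal}Px-b^{\intercal}u$ plus manifestly nonnegative terms. Linear programming weak duality between $\min w^{\intercal}Px$ subject to $Ax\geq b$ and $\max b^{\intercal}u$ subject to $A^{\intercal}u=P^{\intercal}w$, $u\geq 0$ then yields $w^{\intercal}Px\geq b^{\intercal}u$. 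Consequently $H(y^{*})$ supports $\mathcal{P}$, and since the effective scalarization weight $w$ is nonnegative, the contact set $\mathcal{P}\cap H(y^{*})$ consists entirely of weakly $\R^q_+$-minimal points; symmetrically each $y\in\mathcal{P}$ gives $H^{*}(y)$ supporting $\mathcal{D}$ with $K$-maximal contact.

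Next I would promote this to faces. Given a $K$-maximal proper face $F^{*}$ of $\mathcal{D}$, pick $y^{*}$ in its relative interior; the corresponding $(u,w)$ is an optimal dual solution to the scalarization $\min w^{\intercal}Px$ subject to $Ax\geq b$, so strong LP duality identifies $\Psi(F^{*})=\mathcal{P}\cap H(y^{*})$ with its primal optimal set. Because every $y^{*}\in F^{*}$ shares the same active-constraint pattern, this set equals $\mathcal{P}\cap\bigcap_{y^{*}\in F^{*}}H(y^{*})$, and it is a weakly $\R^q_+$-minimal face of $\mathcal{P}$. A symmetric argument sends every weakly $\R^q_+$-minimal proper face $F$ of $\mathcal{P}$ to a $K$-maximal proper face of $\mathcal{D}$ via $F\mapsto\mathcal{D}\cap\bigcap_{y\in F}H^{*}(y)$. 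Inclusion reversal is immediate from these intersection formulas, and the mutual-inverse property reduces to a nondegeneracy statement for $\varphi$: any $y^{*}\in\mathcal{D}$ with $\varphi(y,y^{*})=0$ for all $y\in F$ is an optimal dual solution to every scalarization whose primal optima define $F$, and therefore lies in $\Psi^{-1}(F)$.

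Finally, the dimension equality $\dim F^{*}+\dim\Psi(F^{*})=q-1$ drops out of the bijection together with a rank count. The face $\Psi(F^{*})$ is cut out of $\mathcal{P}\subseteq\R^q$ by the affine equations $\varphi(\cdot,y^{*})=0$ with $y^{*}$ ranging over an affine basis of $F^{*}$; because $\varphi$ is affine in $y$ and the normalization $e^{\intercal}w=1$ is already absorbed into the definition of $\varphi$, each additional basis element of $F^{*}$ imposes one new independent linear condition on $y$. I expect this rank calculation to be the main obstacle: one must verify that $\varphi$ never degenerates on the affine hulls involved, and separately that $K$-maximality on the dual side corresponds to \emph{weak} rather than strong $\R^q_+$-minimality on the primal side. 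This cone bookkeeping is precisely where the special form of $K$ and the built-in normalization $e^{\intercal}w=1$ do the crucial work.
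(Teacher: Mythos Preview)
The paper does not prove this theorem at all: Theorem~\ref{th_geo} is quoted from \cite[Theorem~3.1]{geom_dual} and used as an imported black box on which Proposition~\ref{prop_1}, Corollary~\ref{th_para2}, and the derivation of Theorem~\ref{th_para} rest. There is consequently no in-paper proof to compare your proposal against.

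On its own merits, your three-stage plan (weak duality from LP weak duality, face correspondence from LP strong duality, then a dimension count) is the natural route and your weak-duality computation is correct. But the sketch leaves the substantive steps open. The claim that ``every $y^{*}\in F^{*}$ shares the same active-constraint pattern'' is false as stated---only relative-interior points of a face share the full active set---so you still owe an argument that working at a relative-interior point determines $\Psi(F^{*})$ as the intersection over \emph{all} of $F^{*}$. The mutual-inverse step (``reduces to a nondegeneracy statement for $\varphi$'') is precisely the heart of the theorem and is not established here; one must show that applying $\Psi$ and then the candidate inverse returns the original face rather than a strictly larger one, which requires more than optimality of individual scalarizations. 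Finally, you yourself flag the dimension formula as the main obstacle, and indeed the assertion that each new affine-basis element of $F^{*}$ imposes an \emph{independent} linear condition on $y$ is exactly the nondegeneracy that needs proof. So the architecture is sound, but what you have is an outline, not a proof.
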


Note that $\Psi$ maps the $K$-maximal vertices of $\mathcal{D}$ to the weakly $\R^q_+$-minimal facets of $\mathcal{P}$ and $\Psi^{-1}$ maps the weakly $\R^q_+$-minimal vertices of $\mathcal{P}$ to the $K$-maximal facets of $\mathcal{D}$. We illustrate geometric duality by an example. Consider problem \eqref{P} with the following data:
\[
P = 
\begin{pmatrix}
1 & 0 \\
0 & 1
\end{pmatrix},
\quad
A =
\begin{pmatrix}
1 & -1 \\
8 & 2 \\
4 & 2 \\
2 & 4 
\end{pmatrix},
\quad
b =
\begin{pmatrix}
-3 \\
11 \\
7 \\
5
\end{pmatrix}.
\]
The set $\mathcal{D}$ can be calculated as:
\[
\mathcal{D}=\conv\left\{
\begin{pmatrix}
\frac{1}{3} \\ \frac{5}{6}
\end{pmatrix}
,
\begin{pmatrix}
\frac{2}{3} \\ \frac{7}{6}
\end{pmatrix}
,
\begin{pmatrix}
\frac{4}{5} \\ \frac{11}{10}
\end{pmatrix}
,
\begin{pmatrix}
1 \\ \frac{1}{2}
\end{pmatrix}
\right\}-K,
\]
where $\conv M$ denotes the convex hull of a set $M$. The upper and lower images of the primal and dual problem for this example are shown in Figure~\ref{fig1}.

\begin{figure}
\begin{center}
\begin{minipage}[t]{.45\linewidth}
\begin{tikzpicture}
\coordinate (y) at (0,5);
\coordinate (x) at (5,0);
\draw[axis,thick] (y) node[above] {$x_{2}$} [thick] -- (0,0) [thick] -- (x) node[right] {$x_{1}$};
\node[left=1pt] at (0,1) {1};
\node[below=1pt] at (1,0) {1};

\shade[lower left=gray, upper right=white] (0.5,5.5) -- (0.5,3.5) -- (1,1.5) -- (1.5,0.5) -- (3.5,-0.5) -- (5,-0.5) -- (5,5.5) -- cycle;
\draw[thick] (0.5,5.5) -- (0.5,3.5) -- (1,1.5) -- (1.5,0.5) -- (3.5,-0.5);
\draw[axis,thick] (y) node[above] {$x_{2}$} [thick] -- (0,0) [thick] -- (x) node[right] {$x_{1}$};
\node[xshift=5pt, yshift=10pt] at (2.5,0) {$\mathcal{P}$};
\end{tikzpicture}
\end{minipage}
\begin{minipage}[t]{.45\linewidth}
\begin{tikzpicture}[scale=3.33]
\coordinate (y) at (0,1.5);
\coordinate (x) at (1.5,0);
\node[left=1pt] at (0,1) {1};
\node[below=1pt, xshift=5pt] at (1,0) {1};

\shade[top color=gray, bottom color=white] (0.33,-0.2) -- (0.33,0.833) -- (0.666,1.166) -- (0.8,1.1) -- (1,0.5) -- (1,-0.2) -- cycle;
\draw[thick] (0.33,-0.2) -- (0.33,0.833) node[left, xshift=3pt] {$\left(\frac{1}{3},\frac{5}{6}\right)$} -- (0.666,1.166) node[above] {$\left(\frac{2}{3},\frac{7}{6}\right)$} -- (0.8,1.1) node[right] {$\left(\frac{4}{5},\frac{11}{10}\right)$} -- (1,0.5) node[right, xshift=-2pt] {$\left(1,\frac{1}{2}\right)$} -- (1,-0.2);
\draw[axis,thick] (y) node[above] {$b^{\intercal}u$} [thick] -- (0,0) [thick] -- (x) node[right] {$w_{1}$};
\node[xshift=-10pt, yshift=10pt] at (1,0) {$\mathcal{D}$};
\end{tikzpicture}
\end{minipage}
\end{center}
\caption{The four weakly $\R^2_+$-minimal facets of $\mathcal{P}$ correspond to the four $K$-maximal vertices of $\mathcal{D}$ and the three weakly $\R^2_+$-minimal vertices of $\mathcal{P}$ correspond to the three $K$-maximal facets of $\mathcal{D}$.}\label{fig1}
\end{figure}
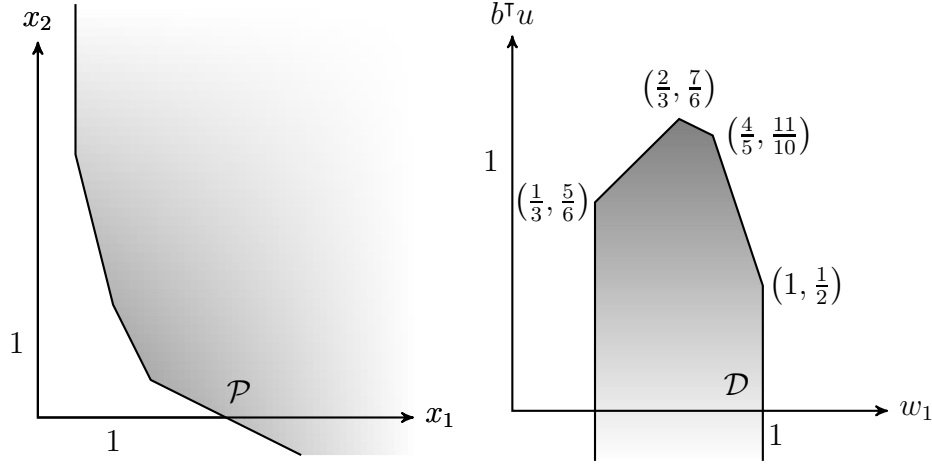

\section{Parametric duality}\label{3_2}

In this section we recall parametric duality as introduced in \cite{luc:on_duality_in_multiple_objective_linear_programming}. We restate the main result of \cite{luc:on_duality_in_multiple_objective_linear_programming} by the same notation as used for geometric duality.
In parametric duality, the geometric dual objective function 
$$D:\mathbb{R}^{m} \times \mathbb{R}^{q} \to \mathbb{R}^{q},\; D(u,w) := (w_{1},\dots,w_{q-1},b^{\intercal}u)$$
is replaced by
$$\bar D:\mathbb{R}^{m} \times \mathbb{R}^{q} \to \mathbb{R}^{q+1},\; \bar D(u,w) := (w_{1},\dots,w_{q},b^{\intercal}u).$$
Moreover the ordering cone $K$ used for the geometric dual is replaced by $\R^{q+1}_+$. This leads to the following dual multiple objective linear program \eqref{D*} of \eqref{P}:
\begin{equation}\label{D*}
\begin{aligned}
\textstyle{\max_{\R^{q+1}_+}} \bar D(u,w)\quad \text{s.t.} \quad & A^{\intercal}u = P^{\intercal}w, \\
& (u,w) \geq 0, \\
& e^{\intercal}w=1.
\end{aligned}
\tag{$\bar{{\rm D}}$}
\end{equation}
Note that \eqref{D*} has the same feasible set as \eqref{D}, denoted by $T$.
The lower image of \eqref{D*} is the set
$$ \bar{\mathcal{D}} := \bar D[T]-\R^{q+1}_+ := \{y \in \R^{q+1}|\; \exists (u,w) \in T:\; y \leq_{\R^{q+1}_+} \bar{D}(u,w)\}.$$
A face of a polyhedral convex set $M$ is said to be $\R^{q+1}_+$-minimal / $\R^{q+1}_+$-maximal if it only consists of $\R^{q+1}_+$-minimal / $\R^{q+1}_+$-maximal elements of $M$. The main result of parametric duality can be stated as follows.

\begin{theorem}\label{th_para}(\cite[Corollary 4.4]{luc:on_duality_in_multiple_objective_linear_programming})
A face $F$ of $\mathcal{P}$ is (weakly) $\R^{q}_+$-minimal if and only if the set
\begin{equation*}
\bar F^{*} := \bigcap_{y \in F} \left\lbrace
\begin{pmatrix}
w \\ t
\end{pmatrix}
\in \bar{\mathcal{D}} \,\middle\vert\,
\begin{pmatrix}
y \\ -1
\end{pmatrix}^{\intercal}
\begin{pmatrix}
w \\ t
\end{pmatrix}
=0\right\rbrace,
\end{equation*}
in which at least one $w$ is strictly positive (respectively $w \geq 0$), is a $\R^{q+1}_+$-maximal face of $\bar{\mathcal{D}}$.

Similarly a face $\bar F^{*}$ of $\bar{\mathcal{D}}$ is $\R^{q+1}_+$-maximal if and only if the set
\begin{equation*}
F := \bigcap_{
(w^{\intercal}, t)^{\intercal}
\in \bar F^{*}} \left\lbrace y \in \mathcal{P} \,\middle\vert\,
\begin{pmatrix}
y \\ -1
\end{pmatrix}^{\intercal}
\begin{pmatrix}
w \\ t
\end{pmatrix}
=0\right\rbrace,
\end{equation*}
in which at least one $w$ is strictly positive (respectively $w \geq 0$), is a (weakly) $\R^{q}_+$-minimal face of $\mathcal{P}$.
\end{theorem}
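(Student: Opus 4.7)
My plan is to derive Theorem~\ref{th_para} directly from Theorem~\ref{th_geo} by exhibiting an affine embedding that identifies $\mathcal{D}$ with the $\R^{q+1}_+$-maximal boundary of $\bar{\mathcal{D}}$. Concretely, I introduce the affine map $\phi: \R^q \to \R^{q+1}$ given by
\[
\phi(y_1^*, \ldots, y_{q-1}^*, y_q^*) := \bigl(y_1^*, \ldots, y_{q-1}^*,\; 1 - y_1^* - \cdots - y_{q-1}^*,\; y_q^*\bigr),
\]
which is an affine bijection between $\R^q$ and the hyperplane $E := \{(w,t) \in \R^{q+1} \mid e^\intercal w = 1\}$. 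Since every feasible $(u,w) \in T$ satisfies $e^\intercal w = 1$, one immediately has $\bar D[T] = \phi(D[T]) \subseteq E$, and a short check with the definitions yields the pivotal identity $\phi(\mathcal{D}) = \bar{\mathcal{D}} \cap E$.

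Next I would establish the face-level correspondence. The hyperplane $E$ supports $\bar{\mathcal{D}}$ from above in the $\R^{q+1}_+$ order (indeed $\bar{\mathcal{D}} \subseteq \{e^\intercal w \leq 1\}$), so $\bar{\mathcal{D}} \cap E$ is itself a face of $\bar{\mathcal{D}}$; conversely, every $\R^{q+1}_+$-maximal point of $\bar{\mathcal{D}}$ must lie in $E$, because a point with $e^\intercal w < 1$ is strictly dominated in some $w_i$-coordinate by the witnessing element $\bar D(u',w')$ with $e^\intercal w' = 1$. Within $\bar{\mathcal{D}} \cap E$ the only direction of $\R^{q+1}_+$ that preserves $E$ is the last coordinate, which is exactly the image of $K$ under the linear part of $\phi$. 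Hence $\R^{q+1}_+$-maximality of a face of $\bar{\mathcal{D}}$ is equivalent to $K$-maximality of its $\phi^{-1}$-image in $\mathcal{D}$, and $\phi$ induces an inclusion-preserving bijection between the $K$-maximal proper faces of $\mathcal{D}$ and the $\R^{q+1}_+$-maximal proper faces of $\bar{\mathcal{D}}$.

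The key algebraic check is that $\phi$ translates the geometric coupling function into the parametric bilinear form. Writing $y^* = (w_1,\ldots,w_{q-1},t)$ and substituting $w_q = 1 - \sum_{i=1}^{q-1} w_i$ yields
\[
\varphi(y, y^*) = \sum_{i=1}^{q-1} y_i w_i + y_q\Bigl(1 - \sum_{i=1}^{q-1} w_i\Bigr) - t = \sum_{i=1}^{q} y_i w_i - t = \begin{pmatrix} y \\ -1 \end{pmatrix}^{\intercal} \begin{pmatrix} w \\ t \end{pmatrix}.
\]
Thus $\phi$ carries $H^*(y)$ onto the trace on $E$ of the parametric hyperplane $\{(w,t) \mid y^\intercal w = t\}$, and the set $\bar F^*$ in Theorem~\ref{th_para}, once restricted by $w \geq 0$ (which is automatic on $\phi(\mathcal{D})$), is exactly $\phi(\Psi^{-1}(F))$. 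Combining the face correspondence above with Theorem~\ref{th_geo} gives the weak-minimality half of Theorem~\ref{th_para}, and the converse direction is handled symmetrically by pulling back along $\phi^{-1}$ and applying $\Psi$.

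The main obstacle I foresee is the strict-minimality refinement, since Theorem~\ref{th_geo} is formulated only for weakly $\R^q_+$-minimal faces. I expect the stronger positivity condition ``at least one $w_i > 0$'' on $\bar F^*$ to single out precisely those $\R^{q+1}_+$-maximal faces whose $\phi^{-1}$-image corresponds under $\Psi$ to a (strictly) $\R^q_+$-minimal face of $\mathcal{P}$, but a clean verification will require a supplementary polyhedral argument on how the $w$-projection of the dual feasible set interacts with strict dominance in $\mathcal{P}$, and this is where the bookkeeping is likely to be most delicate.
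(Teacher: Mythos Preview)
Your approach is essentially identical to the paper's: your map $\phi$ is the paper's $\gamma$ in Proposition~\ref{prop_1}, the face correspondence you describe is precisely that proposition, and your algebraic identification of $\varphi(y,y^*)$ with $(y,-1)^\intercal(w,t)$ is exactly how the paper rewrites $\Phi\circ\Psi^{-1}$ and $\Psi\circ\Phi^{-1}$ in the final proof.

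One correction on the part you flag as delicate: the condition ``at least one $w$ is strictly positive'' in Theorem~\ref{th_para} means there exists $(w,t)\in\bar F^{*}$ with $w>0$ componentwise, not merely that some single coordinate $w_i>0$ (the latter is automatic on $E$ since $e^\intercal w=1$). With the correct reading the obstacle dissolves in two lines, and the paper's argument is simply this: for any $(w,t)\in\bar F^{*}$ the face $F$ is contained in $\hat F:=\{y\in\mathcal P\mid w^\intercal y=t\}$, and because $\mathcal P=\mathcal P+\R^q_+$, the supporting face $\hat F$ consists of (strictly) $\R^q_+$-minimal points if and only if $w>0$.
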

In Figure \ref{fig_2} we illustrate parametric duality with the example from the previous section.
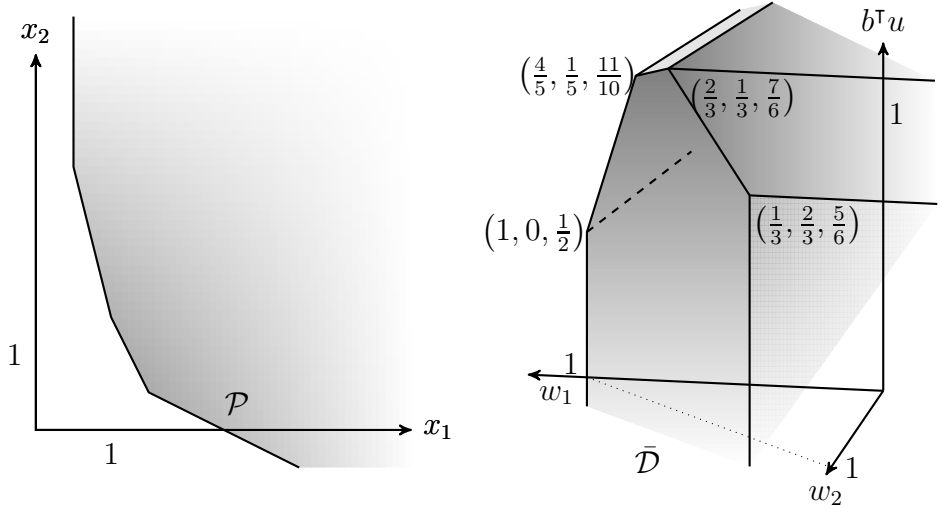
\begin{figure}[ht]
\begin{center}
\begin{minipage}[t]{.45\linewidth}
\begin{tikzpicture}[scale=1]
\coordinate (y) at (0,5);
\coordinate (x) at (5,0);
\draw[axis,thick] (y) node[above] {$x_{2}$} [thick] -- (0,0) [thick] -- (x) node[right] {$x_{1}$};
\node[left=1pt] at (0,1) {1};
\node[below=1pt] at (1,0) {1};

\shade[lower left=gray, upper right=white] (0.5,5.5) -- (0.5,3.5) -- (1,1.5) -- (1.5,0.5) -- (3.5,-0.5) -- (5,-0.5) -- (5,5.5) -- cycle;
\draw[thick] (0.5,5.5) -- (0.5,3.5) -- (1,1.5) -- (1.5,0.5) -- (3.5,-0.5);
\draw[axis,thick] (y) node[above] {$x_{2}$} [thick] -- (0,0) [thick] -- (x) node[right] {$x_{1}$};
\node[xshift=5pt, yshift=10pt] at (2.5,0) {$\mathcal{P}$};

\node at (0,-1) {};
\end{tikzpicture}
\end{minipage}
\begin{minipage}[t]{.45\linewidth}
\tdplotsetmaincoords{75}{190}
\begin{tikzpicture}[tdplot_main_coords, scale=4]
\shade[top color=gray, bottom color=white] (1,0,-0.1) -- (1,0,0.5) -- (0.8, 0.2, 1.1) -- (0.66667, 0.33333, 1.166667) -- (0.33333, 0.66667, 0.833333) -- (0.33333, 0.66667, -0.1);



\fill[color=gray, opacity=0.2] (0.8, 0.2, 1.1) -- (0.66667, 0.33333, 1.166667) -- (0.66667, 0.33333, 1.166667) -- (0.66667, -1.66667, 0.866667) -- (0.8, -1.8, 0.8);
\shade[left color=gray, right color=white] (0.66667, 0.33333, 1.166667) -- (-0.233333, 0.33333, 1.166667) -- (0.66667, -1.66667, 0.866667);
\shade[left color=gray, right color=white] (0.66667, 0.33333, 1.166667) -- (0.33333, 0.66667, 0.833333) -- (-0.3, 0.66667, 0.833333) -- (-0.233333, 0.33333, 1.166667);

\shade[upper left=gray, lower right=white, opacity = 0.5] (0.33333, 0.66667, -0.1) -- (0.33333, 0.66667, 0.833333) -- (-0.3, 0.66667, 0.833333);
\draw[thick] (1, 0, -0.1) -- (1,0,0.5) -- (0.8, 0.2, 1.1) -- (0.66667, 0.33333, 1.166667);
\draw[thick] (0.66667, 0.33333, 1.166667) -- (0.33333, 0.66667, 0.833333) -- (0.33333, 0.66667, -0.1);



\draw[thick] (0.66667, 0.33333, 1.166667) -- (-0.233333, 0.33333, 1.166667);

\draw[thick] (0.33333, 0.66667, 0.833333) -- (-0.3, 0.66667, 0.833333);

\draw[dashed, thick] (1,0,0.5) -- (1,-2,0.25);
\draw[thick] (0.8, 0.2, 1.1) -- (0.8, -1.8, 0.8);
\draw[thick] (0.66667, 0.33333, 1.166667) -- (0.66667, -1.66667, 0.866667);

\node[xshift=-20pt, yshift=0pt] at (1, 0, 0.5) {$\left(1,0,\frac{1}{2}\right)$};

\node[xshift=-22pt, yshift=0pt] at (0.8, 0.2, 1.1) {$\left(\frac{4}{5},\frac{1}{5},\frac{11}{10}\right)$};

\node[xshift=28pt, yshift=-11pt] at (0.66667, 0.33333, 1.166667) {$\left(\frac{2}{3},\frac{1}{3},\frac{7}{6}\right)$};

\node[yshift=-11pt, xshift=22pt] at (0.33333, 0.66667, 0.833333) {$\left(\frac{1}{3},\frac{2}{3},\frac{5}{6}\right)$};

\node[yshift=8pt, xshift=15pt] at (0.7, 1.3, 0) {$\bar{\mathcal{D}}$};

\draw[thick,->] (0,0,0) -- (1.2,0,0) node[anchor=north west]{$w_{1}$};
\draw[thick,->] (0,0,0) -- (0,1.1,0) node[anchor=north]{$w_{2}$};
\draw[thick,->] (0,0,0) -- (0,0,1.2) node[anchor=south]{$b^{\intercal}u$};
\draw[dotted] (1,0,0) -- (0,1,0);
\node[yshift=5pt, xshift=-6pt] at (1,0,0) {1};
\node[yshift=0pt, xshift= 8pt] at (0,1,0) {1};
\node[xshift=5pt, yshift=-5pt] at (0,0,1) {1};
\end{tikzpicture}
\end{minipage}
\end{center}
\caption{The four weakly $\R^2_+$-minimal facets of $\mathcal{P}$ correspond to the four $\R^3_+$-maximal vertices of $\bar{\mathcal{D}}$. Likewise the three weakly $\R^2_+$-minimal vertices of $\mathcal{P}$ correspond to the three $\R^3_+$-maximal faces of $\bar{\mathcal{D}}$, which are the three bounded one-dimensional faces of $\bar{\mathcal{D}}$. Observe that $\bar{\mathcal{D}}$ intersected with the hyperplane $\{w \in \mathbb{R}^{3} \mid w_{1}+w_{2} = 1\}$ has the same facial structure as $\mathcal{D}$.}
\label{fig_2}
\end{figure}

\section{Equivalence between geometric and parametric duality}

In this last section we show that geometric duality and parametric duality are equivalent. This means that Theorem \ref{th_para} can be derived from Theorem \ref{th_geo}, and vise versa, where the latter has already been shown by Luc \cite{luc:on_duality_in_multiple_objective_linear_programming}.

Let $\pi:\R^{q+1}\to \R^q$ denote the projection
$$\pi(w_1,\dots,w_{q+1}):=(w_1,\dots,w_{q-1},w_{q+1}).$$
 For a set $F \subseteq \R^{q}$, we set $\pi^{-1}[F]:=\{w \in \R^{q+1}|\; \pi(w) \in F\}$. The following proposition describes the essential geometric relation between geometric duality and parametric duality.

\begin{proposition}\label{prop_1}
The function $\Phi:2^{\R^q}\to 2^{\R^{q+1}}$, 
$$ \Phi:2^{\R^q}\to 2^{\R^{q+1}},\quad \Phi(F):=\pi^{-1}[F] \cap \{(w,t) \in \R^{q} \times \R |\;e^{\intercal} w = 1\}$$
is an inclusion-invariant one-to-one map between the $K$-maximal faces of $\mathcal{D}$ and the $\R^{q+1}_+$-maximal faces of $\bar{\mathcal{D}}$. The inverse map is 
$$ \Phi^{-1}:2^{\R^{q+1}}\to 2^{\R^{q}},\quad \Phi^{-1}(\bar F) = \pi[\bar F].$$
\end{proposition}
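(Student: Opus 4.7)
The plan is to realize $\Phi$ as an affine isomorphism between $\mathcal{D}$ and a hyperplane-exposed face of $\bar{\mathcal{D}}$, and then transfer both the face structure and the maximality notion across this isomorphism. Define
\begin{equation*}
\iota:\R^q \to \R^{q+1}, \quad \iota(y_1,\dots,y_q) := (y_1,\dots,y_{q-1},\,1-\textstyle\sum_{i=1}^{q-1} y_i,\,y_q),
\end{equation*}
and set $H := \{(w,t) \in \R^q \times \R \mid e^\intercal w = 1\}$. Then $\iota$ is an affine bijection of $\R^q$ onto $H$ with inverse $\pi|_H$, and a direct check from the definition gives $\Phi(F) = \iota(F)$ for every $F \subseteq \R^q$.

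The first key step is to prove the set identity $\iota(\mathcal{D}) = \bar{\mathcal{D}} \cap H$ together with the fact that $H$ is a supporting hyperplane of $\bar{\mathcal{D}}$. Given $y \in \mathcal{D}$ with witness $(u,w) \in T$ satisfying $w_i = y_i$ for $i < q$ and $b^\intercal u \geq y_q$, one checks $\iota(y) \leq_{\R^{q+1}_+} \bar D(u,w)$, so $\iota(y) \in \bar{\mathcal{D}} \cap H$. Conversely, if $(w,t) \in \bar{\mathcal{D}} \cap H$ has witness $(u,v) \in T$ with $(w,t) \leq_{\R^{q+1}_+} \bar D(u,v)$, then $e^\intercal w = 1 = e^\intercal v$ combined with $w \leq v$ forces $w = v$, hence $\pi(w,t) \in \mathcal{D}$. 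The same calculation yields $e^\intercal w \leq 1$ for every $(w,t) \in \bar{\mathcal{D}}$, so $H$ exposes the face $\bar{\mathcal{D}} \cap H$ of $\bar{\mathcal{D}}$.

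I would then transfer maximality pointwise: any $\R^{q+1}_+$-maximal element of $\bar{\mathcal{D}}$ must coincide with its witness $\bar D(u,v) \in H$ (otherwise the witness strictly dominates it), and for any lifted point $\iota(y) \in H$, raising any of the first $q$ coordinates pushes $e^\intercal w$ strictly above $1$ and hence out of $\bar{\mathcal{D}}$, so only the last coordinate is relevant and $\R^{q+1}_+$-maximality of $\iota(y)$ in $\bar{\mathcal{D}}$ reduces to $K$-maximality of $y$ in $\mathcal{D}$. The face correspondence then assembles: since $\iota$ restricts to an affine isomorphism of polyhedra $\mathcal{D} \to \bar{\mathcal{D}} \cap H$, it induces an isomorphism of face lattices; every face of the exposed face $\bar{\mathcal{D}} \cap H$ is a face of $\bar{\mathcal{D}}$, and conversely any face of $\bar{\mathcal{D}}$ consisting of $\R^{q+1}_+$-maximal points lies in $H$ and is therefore a face of $\bar{\mathcal{D}} \cap H$. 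Combined with the pointwise correspondence, this yields the claimed inclusion-preserving bijection with inverse $\pi$. The main technical obstacle is the geometric identification $\iota(\mathcal{D}) = \bar{\mathcal{D}} \cap H$ and the supporting-hyperplane property, both of which rest on the interplay of the simplex constraint $e^\intercal w = 1$ built into $T$ with the non-negativity cone $\R^{q+1}_+$; once these are secured, the rest is routine face-lattice bookkeeping.
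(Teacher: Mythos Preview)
Your proof is correct and follows essentially the same approach as the paper's own proof: the paper also introduces the affine map $\gamma=\iota$, identifies $\Phi(F)=\gamma[F]$, uses the hyperplane $E=H=\{(w,t)\mid e^{\intercal}w=1\}$ as a supporting hyperplane of $\bar{\mathcal{D}}$ with $\gamma[\mathcal{D}]=\bar{\mathcal{D}}\cap E$, transfers $K$-maximality to $\R^{q+1}_+$-maximality pointwise, and then pushes faces through via the face-of-a-face argument. Your write-up is in fact somewhat more explicit than the paper's (you spell out the verification of $\iota(\mathcal{D})=\bar{\mathcal{D}}\cap H$ and the supporting-hyperplane inequality $e^{\intercal}w\le 1$), but the structure and key ideas are identical.
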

\begin{proof}
Consider the affine function 
$$\gamma:\R^q\to \R^{q+1},\quad \gamma(w) :=\left(w_1,\dots,w_{q-1},1-\sum_{i=1}^{q-1} w_i,\,w_q \right)$$
and note that $\Phi(F)=\{\gamma(w)|\, w \in F\}$.
The map $\gamma:\R^q\to E := \{(w,t) \in \R^q \times \R |\;e^{\intercal} w = 1\}$ is bijective with inverse map $\pi |_{E}$. Hence, $\Phi:2^{\R^q} \to 2^{E}$ defined by $\Phi(F)=\gamma[F]=\pi^{-1}[F] \cap E$ is one-to-one with inverse mapping $\Phi^{-1}(\bar F)=\pi[\bar F]=\gamma^{-1}[\bar F]$. Moreover, the set of $\R^{q+1}_+$-maximal points in $\bar{\mathcal{D}}$ is a subset of $E$.
From the definitions of the lower images $\mathcal{D}$ and $\bar{\mathcal{D}}$, we obtain 
$$\bar{\mathcal{D}} = \Phi(\mathcal{D}) - (\R^q_+ \times \{0\})$$
and 
$$\mathcal{D} = \Phi^{-1}(\bar{\mathcal{D}} \cap E ).$$
It follows that a point $w$ is $K$-maximal in $\mathcal{D}$ if and only if $\gamma(w)$ is $\R^{q+1}_+$-maximal in $\bar{\mathcal{D}}$. To see this, note that the definition of $\gamma$ implies
$$\bar w \leq_K w  \iff \gamma(\bar w) \leq_{\R^{q+1}_+} \gamma(w),$$
and, since $\gamma(w) \in E$,
 $$w \in \mathcal{D} \iff \gamma(w) \in \mathcal{D}'.$$

It remains to show that $K$-maximal faces of $\mathcal{D}$ are mapped to $\R^{q+1}_+$-maximal faces of $\bar{\mathcal{D}}$ and vice versa. If $F$ is a face of $\mathcal{D}$, then $\Phi(F)$ is a face of $\Phi(\mathcal{D})$. Since $E$ is a supporting hyperplane of $\mathcal{\bar{D}}$ with $\Phi(\mathcal{D})=\mathcal{\bar{D}} \cap E$, $\Phi(\mathcal{D})$ is a face of $\mathcal{\bar{D}}$ and hence $\Phi(F)$ is a face of $\bar{\mathcal{D}}$, which implies the first implication.

Now let $\bar F$ be an $\R^{q+1}_+$-maximal face of $\bar{\mathcal{D}}$. Then $\bar F$ belongs to the hyperplane $E$ and thus $\Phi^{-1}(\bar F)= \pi[\bar F]$ is a face of $\mathcal{D}$.
\end{proof}

From the geometric duality theorem (Theorem \ref{th_geo}) and Proposition \ref{prop_1} we deduce the following variant of parametric duality. Its formulation is analogous to the one of the geometric duality theorem.

\begin{corollary}\label{th_para2}
	$\Psi\circ\Phi^{-1}$ is an inclusion-reversing one-to-one map between the set of all $\R^{q+1}_+$-maximal proper faces of $\bar{\mathcal{D}}$ and the set of all weakly $\R^q_+$-minimal proper faces of $\mathcal{P}$. The inverse map is $\Phi\circ \Psi^{-1}$.
	Moreover, for every $\R^{q+1}_+$-maximal proper face $\bar F^{*}$ of $\bar{\mathcal{D}}$, one has
	$$\dim \bar F^{*} + \dim (\Psi \circ\Phi^{-1})(\bar F^{*})=q-1.$$
\end{corollary}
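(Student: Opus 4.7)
The plan is to obtain the corollary by straightforward composition of Proposition~\ref{prop_1} with Theorem~\ref{th_geo}. Proposition~\ref{prop_1} supplies the inclusion-preserving bijection $\Phi^{-1}$ from the $\R^{q+1}_+$-maximal faces of $\bar{\mathcal{D}}$ onto the $K$-maximal faces of $\mathcal{D}$, while Theorem~\ref{th_geo} supplies the inclusion-reversing bijection $\Psi$ from the $K$-maximal proper faces of $\mathcal{D}$ onto the weakly $\R^q_+$-minimal proper faces of $\mathcal{P}$. Chaining these, $\Psi \circ \Phi^{-1}$ is a bijection between the two classes of faces in the statement; it is inclusion-reversing because the composition of an inclusion-preserving bijection with an inclusion-reversing one is inclusion-reversing, and its inverse is clearly $\Phi \circ \Psi^{-1}$.

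For this chain to be legitimate I would first verify that the qualifier \emph{proper} transfers cleanly across $\Phi$. The key remark is that $\mathcal{D}$ is not itself a $K$-maximal face of $\mathcal{D}$: since $\mathcal{D} - K \subseteq \mathcal{D}$ and $K \neq \{0\}$, for any $y \in \mathcal{D}$ and any nonzero $k \in K$ the point $y - k$ lies in $\mathcal{D}$ and is strictly dominated by $y$ in the $K$-order, hence is not $K$-maximal. Consequently every $K$-maximal face of $\mathcal{D}$ is automatically proper. The analogous argument for $\bar{\mathcal{D}} = \bar D[T] - \R^{q+1}_+$ shows that every $\R^{q+1}_+$-maximal face of $\bar{\mathcal{D}}$ is proper. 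Thus Proposition~\ref{prop_1} restricts, without further work, to a bijection between the respective classes of proper faces, which is exactly what feeds into Theorem~\ref{th_geo}.

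The dimension formula then follows immediately. The proof of Proposition~\ref{prop_1} identifies $\Phi$ on $\mathcal{D}$ with the affine bijection $\gamma$, so $\dim \Phi(F) = \dim F$ for every face $F$ of $\mathcal{D}$, and in particular $\dim \bar F^{*} = \dim \Phi^{-1}(\bar F^{*})$. Substituting $F = \Phi^{-1}(\bar F^{*})$ into the identity $\dim F + \dim \Psi(F) = q - 1$ of Theorem~\ref{th_geo} yields $\dim \bar F^{*} + \dim (\Psi \circ \Phi^{-1})(\bar F^{*}) = q - 1$. The whole argument is essentially formal composition; I expect no substantive obstacle beyond the bookkeeping needed to match ``proper'' on both sides, which the two observations above already settle.
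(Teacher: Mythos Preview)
Your proposal is correct and follows essentially the same route as the paper: the paper's proof is simply ``Follows from Theorem~\ref{th_geo} and Proposition~\ref{prop_1}. For the last statement, take into account that $\Phi$ preserves the dimension of faces of $\mathcal{D}$.'' You have just spelled out the bookkeeping (in particular the observation that neither $\mathcal{D}$ nor $\bar{\mathcal{D}}$ can itself be a maximal face, so the qualifier \emph{proper} transfers under $\Phi$) and made explicit that $\Phi$ acts as the affine bijection $\gamma$, hence preserves dimension.
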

\begin{proof}
	Follows from Theorem \ref{th_geo} and Proposition \ref{prop_1}. For the last statement, take into the account that $\Phi$ preserves the dimension of faces of $\mathcal{D}$.
\end{proof}

Finally we prove Theorem \ref{th_para} by using Theorem \ref{th_geo}:

\begin{proof}
We have
\begin{equation*}
(\Phi\circ\Psi^{-1})(F) = \bigcap_{y \in F} \left\lbrace
\begin{pmatrix}
w \\ t
\end{pmatrix}
\in \bar{\mathcal{D}} \,\middle\vert\,
\begin{pmatrix}
y \\ -1
\end{pmatrix}^{\intercal}
\begin{pmatrix}
w \\ t
\end{pmatrix}
=0\right\rbrace
\end{equation*}
and
\begin{equation*}
(\Psi\circ\Phi^{-1})(\bar F^*) := \bigcap_{
(w^{\intercal}, t)^{\intercal}
\in \bar F^{*}} \left\lbrace y \in \mathcal{P} \,\middle\vert\,
\begin{pmatrix}
y \\ -1
\end{pmatrix}^{\intercal}
\begin{pmatrix}
w \\ t
\end{pmatrix}
=0\right\rbrace.
\end{equation*}	
Thus the first part of Theorem \ref{th_para} follows from Corollary \ref{th_para2} (and hence from Theorem \ref{th_geo}). It remains to show that weakly $\R^q_+$-minimal faces $F$ of $\mathcal{P}$ are even $\R^q_+$-minimal if and only if at least one $w$ with $(w,t) \in \bar F^*:=(\Phi\circ\Psi^{-1})(F)$ is positive in each component, denoted $w>0$. 

We have $(w,t) \in \bar F^*$ if and only if $F=(\Psi\circ\Phi^{-1})(\bar F^*)$ belongs to the face $\hat F:=\{y \in \R^q|\; w^{\intercal} y = t\}\cap \P$ of $\mathcal{P}$ with $e^{\intercal} w = 1$. Since $\P = \P + \R^q_+$, the elements of $\hat F$ are $\R^q_+$-minimal if and only if $w>0$.
\end{proof}

\noindent
{\bf Acknowledgements:} The authors thank Andreas H. Hamel for bringing the subject to their attention.


\end{document}